\theoremstyle{plain} 
\numberwithin{equation}{section}
\newtheorem{thm}[equation]{Theorem}
\newtheorem{cor}[equation]{Corollary}
\theoremstyle{definition}
\newtheorem{defi}[equation]{Definition}
\newtheorem{rem}[equation]{Remark}
\newtheorem{exm}[equation]{Example}
\newtheorem{construction}[equation]{Construction}
\def\bP{{\mathbb P}}
\newcommand{\gDelta}{\Delta_{\mathrm{Gr}}}
\newcommand{\grS}{\sigma_{\mathrm{Gr}}}
\newcommand{\lDelta}{\Delta_{\mathrm{Lie}}}
\newcommand{\LiS}{\sigma_{\mathrm{Lie}}}
\newcommand{\sDelta}{{\scriptstyle{\Delta}}}
\newcommand{\env}[1]{{#1}^{\operatorname{e}}}
\def\Ext{\operatorname{Ext}\nolimits}
\def\CE{{\mathcal{E}}}
\def\bfp{{\mathbb{F}_p}}
\def\HH{\operatorname{H}\nolimits}
\def\HcH{\operatorname{HH}\nolimits}
\newcommand{\op}[1]{{#1}^{\operatorname{op}}}
\def\proj{\operatorname{(proj)}\nolimits}
\newcommand{\sst}{\scriptscriptstyle}
\def\xra{\xrightarrow}
\def\lra{\longrightarrow}
\title[Hopf algebras and tensor products]{Hopf algebra structures and
  \\ tensor products for group algebras} 
\author[J.~F.~Carlson]{Jon F. Carlson}
\address{Department of Mathematics,
University of Georgia, Athens, GA 30602, USA}
\email{jfc@math.uga.edu}
\author[S.~B.~Iyengar]{Srikanth B. Iyengar}
\address{Department of Mathematics, University of Utah, 
Salt Lake City, UT 68588, USA}
\email{iyengar@math.utah.edu}
\thanks{JFC was partially supported by NSA  grant H98230-15-1-0007
  and by Simons Foundation grant 054813-01. SBI was partially
  supported by NSF grant DMS-1503044. JFC would like to thank the
  University of Utah for kind hospitality during his visit when
  the work on this paper was started.}
\date\today
\subjclass[2010]{20J06 (primary), 20C20}
\begin{document}

\begin{abstract} 
  The modular group algebra of an elementary abelian $p$-group is
  isomorphic to the restricted enveloping algebra of commutative
  restricted Lie algebra. The different ways of regarding this
  algebra result in different Hopf algebra structures that
  determine cup products on cohomology of modules. However,
  it is proved in this paper that the products with  elements
  of the polynomial subring of the cohomology ring generated
  by the Bocksteins of the degree one elements are independent
  of the choice of these coalgebra structures. 
\end{abstract}

\maketitle

\section{Introduction}

This paper concerns the group algebra $kE$ of an elementary
abelian $p$-group $E$ of order $p^r$ over a field $k$ of
characteristic $p$. This algebra has a natural coalgebra
structure $kE \to kE \otimes_{k} kE$ given by $g \mapsto g \otimes g$
for each $g$ in $E$. On the other hand, if
$E= \langle g_1, \dots, g_r \rangle$, a change of variables
$x_i = g_i -1$, realizes $kE$ as a truncated polynomial ring
$k[x_1, \dots, x_r]/(x_1^p, \dots, x_r^p)$. This is isomorphic
to the restricted enveloping algebra of the restricted
$p$-Lie algebra $k^{r}$ with trivial bracket and $p$-power
operation. Again, there is a natural Hopf algebra structure,
this time given by the map $x_i \mapsto x_{i} \otimes 1 + 1 \otimes x_i$.
The two coalgebra structures are not the same and they define
different tensor products on $kE$-modules as well as different
actions of the cohomology ring
$H^{*}(E,k)\cong \Ext^{*}_{kE}(k,k)$ on $\Ext^*_{kE}(M,N)$
for $kE$-modules $M,N$.

The differences in the Hopf structure has shown up in several
works. For example, Avrunin and Scott~\cite{AS} exploited a change
in the coalgebra structure to prove a conjecture of the first
author~\cite{Cvar} that the rank variety and the support variety
of a $kE$-module are homeomorphic. In \cite{CFP,FP} the authors
define bundles on projective space using modules of constant
Jordan type and the Lie coalgebra map. The construction is not
available with the group coalgebra map. Both of these works
used the fact that with the Lie algebra structure there is an
abundance of sub-Hopf algebras generated by units in the
algebra. The immediate motivation for this paper is the desire
to make efficient use of categorical equivalences and  functors
relating commutative algebra and group representation theory;
see \cite{CI}.  The fact that the Hopf algebra structures
differ has been an obstruction to this end.

For any Hopf algebra $A$ over $k$ and $A$-module $M$, the
cohomology ring $\Ext^*_{A}(M,M)$ is a module over the cohomology
ring $\Ext^*_{A}(k,k)$. The action is given by a homomorphism of rings
\[
\theta^{M}\colon \Ext^*_{A}(k,k) \lra \Ext^*_{A}(M,M)
\]
that can be described as follow: take a homogeneous element
$\zeta$ of $\Ext^{n}_{A}(k,k)$, regard is as a length-$n$
exact sequence beginning and ending in the trivial module $k$,
then tensor over $k$ with $M$. The image is the class of that
sequence. The map, in general, depends on the coalgebra structure.
The primary result of this paper is that for the group algebra
of an elementary abelian group the dependence is not so bad.

Specifically, for $E$ an elementary abelian $p$-group, if $S$ is
the polynomial subring of $\HH^*(E,k)$ generated by the Bocksteins
of the degree one elements, then the restriction of $\theta^{M}$
to $S$ is the same for both the group and the Lie coalgebras
structures on $kE$. As a direct corollary one gets that for
$\zeta \in S$ and $L_{\zeta}$ the $kE$-module introduced in \cite{C2},
the isomorphism class of  $L_\zeta \otimes_{k} M$  does not depend
on which of the two Hopf algebra structures is used to define
the action on the tensor product.

A key input in our work is the fact, proved by Pevtsova and
Witherspoon~\cite{PW}, that for any Hopf algebra $A$, the map
$\theta^{M}$ factors through the Hochschild cohomology ring
$\HcH^{*}(A/k;A)$. The advantage gained by this observation
is that the first map, to $\HcH^{*}(A/k;A)$, depends on the
coalgebra structure and not on $M$, while the second depends on $M$
and not on the choice of coalgebra structures. So it is
sufficient to show that, for $A=kE$, the first map is the
same on the elements of $S$ regardless of the coalgebra.
This is accomplished by a straightforward calculation using
the fact that $E$ is a direct product of cyclic groups.

Section \ref{sec:Hopf} of the paper is devoted to preliminaries
on Hopf algebras and cohomology, mainly a detailed proof of
the factorization of $\theta^{M}$ discussed above.  Basic
facts about the cohomology of elementary abelian $p$-groups
are recalled in Section \ref{sec:prelim}, while Section
\ref{sec:eleab} presents a proof of the main theorem.
Results on the tensor products of $L_\zeta$ modules are
presented in Section \ref{sec:lzeta}.

\section{Hopf algebras and cohomology}
\label{sec:Hopf}
This section concerns the cohomology of modules over Hopf
algebras.  The main result is Theorem~\ref{thm:modulecoho},
due to Pevtsova and Witherspoon~\cite[Lemma~13]{PW}.
We present a detailed proof because the constructions
of the maps involved in the statement of the result  are
of  critical importance in the next section. 

Let $k$ denote a field and $A$ a Hopf algebra over $k$,
with unit $\varepsilon\colon A \to k$,  coalgebra  map
$\Delta\colon A \to A \otimes_k A$, and counit
$\eta\colon k\to A$. We assume  that $A$ has an
antipode $\sigma$, that is to say, $\sigma$ is the
inverse of 
the identity on $A$, under the convolution product. We
adapt Sweedler's notation and write
\[
\Delta(\alpha) = \sum_{(\alpha)}\alpha_{1}\otimes\alpha_{2}
\quad\text{for $\alpha\in A$.}
\]
Unless stated to the contrary, the term ``module" is assumed
to mean a finitely generated left module. 

\begin{construction}
\label{con:ktoM}
Let $M$ be an $A$-module. Recall that for each $A$-module $X$,
there is a structure of an $A$-module 
on $X\otimes_{k}M$ induced by the diagonal:
\[
\alpha \cdot (x\otimes m) = \sum_{(\alpha)}\alpha_{1}x\otimes \alpha_{2}m
\]
The assignment $X\mapsto X\otimes_{k}M$ defines an additive
functor, that we denote $\theta^{M}_{\Delta}$, on the category
of $A$-modules, and has the following properties.
\begin{enumerate}[\quad\rm (1)]
\item
  The natural map $M\to k\otimes_{k}M=\theta^{M}_{\Delta}(k)$
  that sends $m$ to $1\otimes m$ is an isomorphism of left
  $A$-modules.
\item
  When $M$ is projective, so is the $A$-module
  $\theta^{M}_{\Delta}(X) = X \otimes_{k} M$.
\end{enumerate}
These are standard computations.  It follows that there
is an induced homomorphism of graded $k$-algebras:
\begin{equation}
\label{eq:ktoM}
\Theta_{\!\sDelta}^{\sst M} \colon \Ext_{A}^{*}(k,k)\lra 
\Ext_{A}^{*}(M,M)\,.
\end{equation}
The notation is intended to emphasize the fact that the map
depends on the coalgebra structure on $A$.
\end{construction}

We write $\env A$ for the enveloping algebra
$A\otimes_{k}\op A$ of $A$. Since $k$ is a field, the
Hochschild cohomology of $A$ as a $k$-algebra can be
introduced as
\[
\HcH^{*}(A/k;A) = \Ext_{\env A}^{*}(A,A)\,.
\]
An $\env A$-module is the same thing as a left-right
$A$-bimodule. In particular, $A$ is naturally an
$A^e$-module, with action defined by
$(\alpha\otimes\beta)\cdot a= \alpha a\beta$.

\begin{construction}
\label{con:AtoM}
Given an $A$-module $M$ and an $\env A$-module $Y$, there
is a residual $A$-module structure on $Y\otimes_{A}M$, defined by 
\[
\alpha \cdot (y\otimes m) = (\alpha y)\otimes m\,,
\]
for $\alpha \in A$, $y \in Y$ and $m \in M$. The
assignment $Y\mapsto Y\otimes_{A}M$ is an additive
functor, denoted $\psi^{M}$, from $\env A$-modules
to $A$-modules.  The next assertions are immediate.
\begin{enumerate}[\quad\rm(1)]
\item
  The natural map $\psi^{M}(A)= A\otimes_{A}M\to M$ that
  sends $a\otimes m$ to $am$, is an isomorphism of $A$-modules.
\item
  When $P$ is a projective $\env A$-module, the $A$-module
  $\psi^{M}(P)$ is projective.
\end{enumerate}
It follows that $\psi^{M}$ induces a  homomorphism of
graded $k$-algebras:
\begin{equation}
\label{eq:AtoM}
\Psi^{M}\colon \HcH^{*}(A/k;A)\lra \Ext^{*}_{A}(M,M)\,.
\end{equation}
Note that this map is entirely independent of the coalgebra
structure on $A$.
\end{construction}

\begin{construction}
\label{con:ktoA}
Let $X$ be an $A$-module. Then $X\otimes_{k}A$ has a structure
of an $A$-module induced by the diagonal $\Delta$. It also
has a right 
$A$-module action induced by the right action of $A$ on itself.
In short, $X\otimes_{k}A$ is a left $\env A$-module, with
action determined by 
\[
(\alpha \otimes \beta) \cdot (x\otimes a) = 
\sum_{(\alpha)}\alpha_{1}x\otimes \alpha_{2}a\beta
\]
The assignment $X\mapsto X\otimes_{k}A$ defines an additive
functor, that we denote $\phi_{\Delta}$, from $A$-modules 
to  $\env A$-modules. This has the following properties.
\begin{enumerate}[\quad\rm (1)]
\item 
  The natural isomorphism $A\xra{\cong} k\otimes_{k}A= \phi_{\Delta}(k)$,
  mapping $a\to 1\otimes a$, is one of 
$\env A$-modules, where the $\env A$-action on $A$ is the usual one.
\item 
  The $\env A$-linear map
  $\iota\colon \env A\to \phi_{\Delta}(A)= A\otimes_{k}A$ where
  $1\otimes 1$ maps to $1\otimes 1$, is an isomorphism, with
  inverse defined by the assignment
\[
\alpha\otimes\beta\mapsto \sum_{(\alpha)} \alpha_{1}\otimes 
\sigma(\alpha_{2})\beta\,.
\]
In particular, $\phi_{\Delta}(A)$ is a free $\env A$-module of
rank one, and $\phi_{\Delta}(P)$ is projective whenever $P$ is a projective
$A$-module. 
\end{enumerate}

Statement (1) is readily verified, given that
$\varepsilon \colon A\to k$ is the counit of the coalgebra
structure on  $A$;  that is to say,
for any $\alpha\in A$, one has
\[
\sum_{(\alpha)}\varepsilon(\alpha_{1})\alpha_{2} = \alpha\,.
\]
As to (2), since the map $\iota$ is $\env A$-linear, by
construction, it suffices to verify that its composition
with the given map (henceforth denoted $\iota^{-1}$, in
anticipation) is the identity. Moreover, $\iota^{-1}$ is
evidently a homomorphism of right $A$-modules, and since 
\[
\iota(\alpha\otimes 1) = (\alpha\otimes 1)\cdot (1\otimes 1) =
\sum_{(\alpha)}\alpha_{1}\otimes \alpha_{2}
\]
it suffices to verify that $\iota^{-1}$ maps the term on
the right to $\alpha\otimes 1$, for any $\alpha\in A$.
To this end, recall that 
$\Delta$ is coassociative, so that  
\[
\sum_{(\alpha)} \sum_{(\alpha_{1})}\alpha_{11}\otimes
\alpha_{12} \otimes \alpha_{2} 
= \sum_{(\alpha)}\sum_{(\alpha_{2})}\alpha_{1}\otimes
\alpha_{21}\otimes \alpha_{22}
\]
This explains the second of the following equalities.
\begin{align*}
\iota^{-1}\big(\sum_{(\alpha)}\alpha_{1}\otimes \alpha_{2}\big) 
&= \sum_{(\alpha)}\sum_{(\alpha_{1})}\alpha_{11}\otimes
\sigma(\alpha_{12})\alpha_{2} \\
&= \sum_{(\alpha)}\sum_{(\alpha_{2})}\alpha_{1}\otimes
\sigma(\alpha_{21})\alpha_{22} \\
& = \sum_{(\alpha)} \alpha_{1}\otimes
\varepsilon(\alpha_{2})  \\
& = \big(\sum_{(\alpha)} \alpha_{1}
\varepsilon(\alpha_{2})\big) \otimes 1 \\
	 & = \alpha \otimes 1
\end{align*}
The third equality a consequence of the definition of the
anitpode and the last equality holds because $\varepsilon$
is the counit of the coalgebra structure on $A$. 

Given properties (1)  and (2) of $\phi_{\Delta}$, it is
immediate that it induces a homomorphism of graded $k$-algebras:
\begin{equation}
\label{eq:ktoA}
\Phi_{\Delta}\colon \Ext_{A}^{*}(k,k)\lra \HcH^{*}(A/k;A)\,.
\end{equation}
\end{construction}

The result below, proved by Pevtsova and Witherspoon~\cite{PW},
links the three homomorphisms, \eqref{eq:ktoM}, \eqref{eq:AtoM}, 
and \eqref{eq:ktoA}, constructed above. 

\begin{thm} 
\label{thm:modulecoho}
Let $A$ be a Hopf algebra over $k$.  For each $A$-module $M$,
the following diagram of graded $k$-algebras commutes.
\[
\xymatrix{
\Ext^{*}_{A}(k,k) \ar[drr]^{\Theta_{\!\sDelta}^{\sst M}} \ar[dd]_{\Phi_{\Delta}} \\
&& \Ext^{*}_{A}(M,M) \\
\HcH^*(A/k;A) \ar[urr]_{\Psi^{M}}
}
\]
\end{thm}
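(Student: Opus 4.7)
The plan is to exhibit a natural isomorphism of functors $\eta\colon \psi^{M}\circ\phi_{\Delta}\xrightarrow{\;\sim\;}\theta^{M}_{\Delta}$ on the category of $A$-modules, and then to transport the desired equality on $\Ext$ through $\eta$ applied to a projective resolution of $k$.

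For each $A$-module $X$, the candidate isomorphism is
\[
\eta_{X}\colon (X\otimes_{k}A)\otimes_{A}M\lra X\otimes_{k}M,\qquad (x\otimes a)\otimes m\longmapsto x\otimes am,
\]
with two-sided inverse $x\otimes m\mapsto (x\otimes 1)\otimes m$. Well-definedness, bijectivity, and naturality in $X$ are standard tensor manipulations. The substantive point is $A$-linearity: the source carries the residual action $\alpha\cdot(y\otimes m)=(\alpha y)\otimes m$ from Construction~\ref{con:AtoM}, where $\alpha y$ uses the diagonal left $\env A$-action on $y=x\otimes a$, while the target carries the diagonal action of Construction~\ref{con:ktoM}. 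A direct Sweedler computation verifies
\[
\eta_{X}\bigl(\alpha\cdot((x\otimes 1)\otimes m)\bigr)=\sum_{(\alpha)}\alpha_{1}x\otimes\alpha_{2}m=\alpha\cdot\eta_{X}((x\otimes 1)\otimes m),
\]
and the general case reduces to this since every element of $(X\otimes_{k}A)\otimes_{A}M$ is a sum of elements of the form $(x\otimes 1)\otimes (am)=(x\otimes a)\otimes m$.

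With $\eta$ in hand, fix a projective $A$-resolution $P_{\bullet}\to k$ and lift $\zeta\in\Ext^{n}_{A}(k,k)$ to a chain map $\widetilde\zeta\colon P_{\bullet}\to P_{\bullet}[-n]$. By Construction~\ref{con:ktoA}(1,2), $\phi_{\Delta}(P_{\bullet})\to A$ is a projective $\env A$-resolution, so $\Phi_{\Delta}(\zeta)$ is the homotopy class of $\phi_{\Delta}(\widetilde\zeta)$, and $\Psi^{M}\bigl(\Phi_{\Delta}(\zeta)\bigr)$ is the class of $\psi^{M}(\phi_{\Delta}(\widetilde\zeta))$ viewed against the complex $\psi^{M}\phi_{\Delta}(P_{\bullet})\to M$. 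Via $\eta$ this complex is identified with $\theta^{M}_{\Delta}(P_{\bullet})\to M$, which Construction~\ref{con:ktoM}(1,2) guarantees to be a projective $A$-resolution of $M$, and $\psi^{M}(\phi_{\Delta}(\widetilde\zeta))$ is identified with $\theta^{M}_{\Delta}(\widetilde\zeta)$, whose class represents $\Theta_{\!\sDelta}^{\sst M}(\zeta)$. The equality $\Psi^{M}\circ\Phi_{\Delta}=\Theta_{\!\sDelta}^{\sst M}$ follows.

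The only genuinely delicate step is the $A$-linearity of $\eta_{X}$: this is the unique place where the coalgebra structure $\Delta$, which enters once through the left $\env A$-action on $X\otimes_{k}A$ and once through the diagonal action on $X\otimes_{k}M$, must be shown to balance. Everything else---naturality, the identifications of resolutions, and compatibility with the Yoneda product (which promotes the equality from graded $k$-linear maps to graded $k$-algebra maps)---is formal.
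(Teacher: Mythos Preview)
Your argument is correct and follows essentially the same route as the paper: both proofs establish the natural isomorphism $\psi^{M}\phi_{\Delta}\cong\theta^{M}_{\Delta}$ via the canonical map $x\otimes m\leftrightarrow (x\otimes 1)\otimes m$ and deduce the equality of induced maps on $\Ext$. You have simply spelled out the Sweedler verification of $A$-linearity and the chain-level transport more explicitly than the paper does.
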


\begin{proof}
  Let $X$ be an $A$-module. Using the description of the
  $A$-action on $X\otimes_{k}M$ and the $\env A$-action
  on $\phi_{\Delta}(X)$, 
it is a direct verification that the canonical bijection 
\[
X\otimes_{k} M  \lra  (X\otimes_{k}A)\otimes_{A} M 
= \phi_{\Delta}(X)\otimes_{A}M
\quad\text{where $x\otimes m\mapsto  x\otimes 1 \otimes m$}
\]
is compatible with the $A$-module structures. It yields an
isomorphism of functors $\theta^{M}_{\Delta}\cong \psi^{M}\phi_{\Delta}$
on the category of $A$-modules. Since $\phi_{\Delta}$  take
projectives to projectives, it follows that there is an equality of 
induced functors. This is the stated result.
\end{proof}

\begin{defi}
\label{defi:Lzeta}
Let $k$ be a field and $A$ a $k$-algebra.  In what follows, we
say that $A$-modules $M$ and $N$ are \emph{stably isomorphic} 
if there exist projective $A$-modules $P$ and $Q$ such that
$M\oplus P \cong N\oplus Q$. 

Let $P_{*}$ be a projective resolution of an $A$-module $M$.
For any integer $d\ge 0$, the image of the boundary map
$\partial\colon P_d  \to P_{d-1}$ is independent of the choice
of $P$, up to a stable isomorphism. We denote it $\Omega^{d}(M)$,
and call it a $d$th syzygy module of $M$.

Fix an element $\zeta\in \Ext^{d}_{A}(k,k)$ and a $d$th
syzygy module $\Omega^{d}(k)$. Then $\zeta$ is represented
by a  homomorphism on $\Omega^d(k)$, that we also call $\zeta$.
So we get an exact sequence of $A$-modules:
\begin{equation}
\label{eq:Lzeta}
\xymatrix{
0 \ar[r] & L_\zeta \ar[r] & \Omega^{d}(k) \ar[r]^{ \ \ \zeta} 
& k \ar[r] & 0
}
\end{equation}
That is, the module $L_\zeta$ is defined to be the kernel  of
map $\zeta$ on $\Omega^{d}(k)$.  Up to a stable isomorphism,
this is independent of  the choice of a syzygy module.
\end{defi}

Given a $k$-algebra $A$, we say that a map
$\Delta\colon A\to A\otimes_{k}A$
\emph{induces a Hopf structure  on $A$} if there exists
a Hopf algebra structure on $A$  (and this includes an antipode)
with $\Delta$ as the comultiplication. For ease of comprehension,
given a coalgebra map $\Delta$ and $A$-modules $X,M,$ the
$A$-module defined on the vector space $X\otimes_{k}M$
using the Hopf structure $\Delta$
is denoted
\[
\Delta(X\otimes_{k}M)
\]
This is precisely the module $\theta^{M}_{\Delta}(X)$ defined
in  Construction~\ref{con:ktoM}. 

\begin{cor}
\label{cor:HH}
Let $\Delta_{1},\Delta_{2}\colon A\to A\otimes_{k}A$ be maps
that induce Hopf algebra structures on $A$.  If
$\zeta\in \Ext^{d}_{A}(k,k)$ is such that
$\Phi_{\Delta_{1}}(\zeta) = \Phi_{\Delta_{2}}(\zeta)$, then for
each $A$-module $M$,  the $A$-modules
$\Delta_{1}(L_{\zeta}\otimes_{k}M)$ and
$\Delta_{2}(L_{\zeta}\otimes_{k}M)$ are stably isomorphic.
\end{cor}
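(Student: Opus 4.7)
The plan is to combine Theorem~\ref{thm:modulecoho} with the exactness and projective-preservation properties of the functor $\theta^{M}_{\Delta}(X) = \Delta(X \otimes_{k} M)$ established in Construction~\ref{con:ktoM}, together with a Schanuel-type identification of modules of the form $L_{\zeta}$.

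First, I would note that Theorem~\ref{thm:modulecoho} gives the factorization $\Theta^{M}_{\Delta_{i}} = \Psi^{M} \circ \Phi_{\Delta_{i}}$ for $i = 1, 2$, so the hypothesis $\Phi_{\Delta_{1}}(\zeta) = \Phi_{\Delta_{2}}(\zeta)$ immediately forces the equality
\[
\Theta^{M}_{\Delta_{1}}(\zeta) = \Theta^{M}_{\Delta_{2}}(\zeta)
\]
in $\Ext^{d}_{A}(M,M)$. The problem thus reduces to showing that, for each Hopf structure $\Delta_{i}$, the module $\Delta_{i}(L_{\zeta} \otimes_{k} M)$ is stably isomorphic to the module $L_{\Theta^{M}_{\Delta_{i}}(\zeta)}$ prescribed by Definition~\ref{defi:Lzeta}.

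To establish this, I would apply the functor $\theta^{M}_{\Delta_{i}}$ to the defining sequence \eqref{eq:Lzeta} of $L_{\zeta}$. Since tensoring over the field $k$ is exact, this yields an exact sequence of $A$-modules
\[
0 \lra \Delta_{i}(L_{\zeta} \otimes_{k} M) \lra \Delta_{i}(\Omega^{d}(k) \otimes_{k} M) \lra M \lra 0,
\]
where the natural identification $\Delta_{i}(k \otimes_{k} M) \cong M$ is the one provided by Construction~\ref{con:ktoM}(1). Moreover, applying $\theta^{M}_{\Delta_{i}}$ to a projective resolution of $k$ produces, by Construction~\ref{con:ktoM}(2), a projective resolution of $M$; hence $\Delta_{i}(\Omega^{d}(k) \otimes_{k} M)$ is a $d$th syzygy of $M$, and the surjection above represents precisely the class $\Theta^{M}_{\Delta_{i}}(\zeta)$. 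By Definition~\ref{defi:Lzeta}, its kernel $\Delta_{i}(L_{\zeta} \otimes_{k} M)$ is therefore stably isomorphic to $L_{\Theta^{M}_{\Delta_{i}}(\zeta)}$.

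Combining these observations yields the result: the equality $\Theta^{M}_{\Delta_{1}}(\zeta) = \Theta^{M}_{\Delta_{2}}(\zeta)$ implies, via the standard Schanuel-type uniqueness statement embedded in Definition~\ref{defi:Lzeta}, that $L_{\Theta^{M}_{\Delta_{1}}(\zeta)}$ and $L_{\Theta^{M}_{\Delta_{2}}(\zeta)}$ are stably isomorphic, and hence so are $\Delta_{1}(L_{\zeta} \otimes_{k} M)$ and $\Delta_{2}(L_{\zeta} \otimes_{k} M)$. I do not anticipate any serious obstacles here; the argument is essentially bookkeeping around Theorem~\ref{thm:modulecoho}, whose role is to concentrate all coalgebra-dependence of $\Theta^{M}_{\Delta}$ into the single factor $\Phi_{\Delta}$.
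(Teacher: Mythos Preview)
Your proposal is correct and follows essentially the same route as the paper: apply the exact functor $\theta^{M}_{\Delta_{i}}$ to the defining sequence~\eqref{eq:Lzeta}, observe that the middle term is a $d$th syzygy of $M$ (up to projectives) and that the resulting surjection represents $\Theta^{M}_{\Delta_{i}}(\zeta)$, then invoke Theorem~\ref{thm:modulecoho} to conclude. The only cosmetic difference is that you phrase the last step via an auxiliary object $L_{\Theta^{M}_{\Delta_{i}}(\zeta)}$, which strictly speaking extends Definition~\ref{defi:Lzeta} from $k$ to a general module $M$; this is harmless, since the same Schanuel-type argument applies verbatim.
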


\begin{proof}
  Let $P_{*}$ be a projective resolution of $k$. For $i=1,2$
  the complex $\theta^{M}_{\Delta_{i}}(P_{*})$ is a projective
  resolution of $\theta^{M}_{\Delta_{i}}(k)\cong M$. Thus, the
  $A$-modules $\theta_{\Delta_{i}}^{M}(\Omega^{d}(k))$ and
  $\Omega^{d}(M)$  are stably isomorphic.  Therefore, the
  exact sequence \eqref{eq:Lzeta} induces an exact sequence
\[
\xymatrixcolsep{1.5pc}
\xymatrix{
0 \ar[r] & \theta_{\Delta_{i}}^{M}(L_\zeta) \ar[r] & \Omega^{d}(M) 
\ar[rr]^-{\Theta_{\Delta_{i}}^{M}(\zeta)} \oplus \proj && M \ar[r] & 0
}
\]
of $A$-modules, where $\Theta_{\Delta_{i}}^{M}$ is the
map~\eqref{eq:ktoM}. Since  $\Delta_{1}(\zeta) = \Delta_{2}(\zeta)$,
by hypothesis, it follows from Theorem~\ref{thm:modulecoho}
that $\Theta_{\Delta_{1}}^{M}(\zeta)=\Theta_{\Delta_{2}}^{M}(\zeta)$.
This yields the desired result.
\end{proof}

\begin{rem}
\label{rem:minimal}
Assume that the $k$-algebra $A$ is finite dimensional.
Then finitely generated modules over $A$ admit minimal
projective resolutions, and hence syzygy modules are
well-defined, up to isomorphism of $A$-modules. What is more,
each $\zeta\in\Ext^{d}_{A}(k,k)$ is represented by a \emph{unique}
homomorphism $\Omega^{d}(k)\to k$, and then setting $L_{\zeta}$
to be its kernel pins down the latter, up to isomorphism.
In the same vein, $\Omega^{d}(k)\otimes_{k}M \cong \Omega^{d}(M)$,
so we gets a well-defined module $L_{\zeta}\otimes_{k}M$. 

It then follows from the argument in Corollary~\ref{cor:HH}
that if $\Phi_{\Delta_{1}}(\zeta) = \Phi_{\Delta_{2}}(\zeta)$
the $A$-modules 
$\Delta_{1}(L_{\zeta}\otimes_{k}M)$ and
$\Delta_{2}(L_{\zeta}\otimes_{k}M)$ are in fact isomorphic.
\end{rem}

\section{Cohomology of elementary abelian $p$-groups} 
\label{sec:prelim}
In this section, we set  notation and review some facts about  the cohomology, and  Hochschild cohomology, of elementary abelian $p$-groups; see \cite[Section 4.5]{CTVZ} and \cite{Ho} for details. Throughout $k$ will be a field of positive characteristic $p$. 

Let $E:=\langle g \rangle$ be a cyclic group of order $p$.
Setting $x = g-1$ we may write $A:= kE$, the group algebra
of $E$ over $k$, as a truncated polynomial ring $A \cong k[x]/(x^p)$.
Consider the complex of projective $A$-modules
\begin{equation}
\label{eq:kres}
\xymatrix{
  P_{*}\colon & \cdots \ar[r] & A \ar[r]^x & A \ar[r]^{x^{p-1}}
  & A \ar[r]^x & A  \ar[r] & 0\,,
}
\end{equation}
that is nonzero in degrees $\ge 0$. The augmentation
$\varepsilon \colon P_{*} \to k$, that maps $P_{i}$ to zero
for $i>0$ and is the canonical surjection for $i=0$, is a
morphism of complexes, and a quasi-isomorphism; thus
$(P_{*},\varepsilon)$ is a minimal projective resolution
of $k$, over $A$.

Let $E := \langle g_1, \dots, g_r \rangle$ be an elementary
abelian group of order $p^r$. For each integer $i=1,\dots,r$,
set $A_i:=k[x_i]/(x_i^p)$. Then
$A := A_1 \otimes_k \dots \otimes_k A_r$ is the group algebra
of $E$, where $x_i = g_i-1$ for each $i$.  With
$(P^{(i)}_*, \varepsilon_i)$ the projective $A_i$-resolution
of $k$, from \eqref{eq:kres}, the complex
\begin{equation}
\label{eq:kres2}
(P_{*},\varepsilon):= (P^{(1)}_* \otimes_{k} \cdots \otimes_{k} P_*^{(r)},
\varepsilon_1 \otimes \cdots \otimes \varepsilon_r)\,.
\end{equation}
is a projective $A$-resolution of $k$.  Set 
\[
P_{j_1, \cdots, j_r} := P_{j_1}^{(1)} \otimes_{k} \cdots \otimes_{k}
P_{j_r}^{(r)} = A_1 \otimes_{k} \cdots \otimes_{k} A_r = A
\]
and let $\theta_{j_1, \dots, j_r}\colon P_* \to k$ be the map
whose restriction to $P_{j_1, \dots, j_r}$ is the augmentation
$A\to k$ and whose restriction to $P_{\ell_1, \dots, \ell_r}$
is zero if $j_i \ne \ell_i$ for some $i$. Let
$\hat{\eta_i}:= \theta_{j_1, \dots, j_r}$ where
$j_i = 1$ and $j_\ell = 0$ for $\ell \neq i$. Let
$\hat{\zeta_i}:=\theta_{j_1, \dots, j_r}$ where $j_i = 2$
and $j_\ell = 0$ for $\ell \neq i$.

The cohomology ring of $A$ has the form
\[
\Ext_{A}^{*}(k,k) = \begin{cases} 
k[\eta_1, \dots, \eta_r] & \text{ if $p = 2$, } \\
\Lambda(\eta_1, \dots, \eta_r) \otimes_{k}
k[\zeta_1, \dots, \zeta_r]  &\text{ otherwise,}
\end{cases}
\]
where each $\eta_i$ is represented by the cocycle
$\hat{\eta}_i$ and each $\zeta_i$ is represented by
$\hat{\zeta}_i$. Here $\eta_i$ is in degree $1$ and $\zeta_i$
is in degree $2$. Since the resolution $(P_*, \varepsilon)$
is minimal, $\eta_i$ is uniquely represented by  $\hat{\eta}_i$
and $\zeta_i$ is uniquely represented by $\hat{\zeta}_i$.

When $p=2$, let $\zeta_i = \eta_i^2$ for $i = 1, \dots, r$.
Let $S$ be the polynomial subring of $\Ext_{A}^{*}(k,k)$ generated  
by the $\zeta_i$'s, so that
\begin{equation}
\label{eq:ringS}
S = k[\zeta_1, \dots,\zeta_r]\,.
\end{equation}
The Bockstein map is an operation on cohomology that
raises degrees by one. If $p=2$ it coincides with the
Steenrod square. For each $i$, the Bockstein of the
cohomology class $\eta_i$ is the class $\zeta_i$. Thus,
when $k = \bfp$, the  subring $S$ is the subring generated
by the images of the degree one classes under the Bockstein map. 

Now consider the Hochschild cohomology.  
As before, set $A:= k[x]/(x^p)$. The enveloping
algebra $\env A$ is a truncated polynomial ring in
variables $y:= x \otimes 1$ and $z:= 1 \otimes x$, so
that  $\env A = k[y,z]/(y^p,z^p).$ The $\env A$ action
on $A$ is defined by the surjection  $\mu\colon \env A\to A$
that maps $y$ and $z$ to $x$.  Thus $\env A \cong A[y-z]/(y-z)^p.$
The kernel of $\mu$ is
the ideal $(y-z)$ and the minimal projective resolution
of $A$ as an $\env A$-module 
has the form:
\begin{equation}
\label{eq:Ares}
\xymatrix{
Q_{*}\colon & \dots \ar[r] & \env A \ar[r]^{y-z} & \env A \ar[r]^{\ (y-z)^{p-1}} 
& \env A \ar[r]^{y-z} &  \env A \ar[r] & 0
}
\end{equation}
with canonical augmentation $Q_{*}\to A$, also denoted $\mu$. 

Let $A:=k[x_{1},\dots,x_{r}]/(x_{1}^{p},\dots,x_{r}^{p})$ and
set  $A_i:=k[x_i]/(x_i^p)$. With $(Q^{(i)}_*, \mu_i)$ the
projective $\env {A_i}$-resolution of $A_{i}$ from \eqref{eq:Ares},
the complex
\begin{equation}
\label{eq:Ares2}
(Q_{*},\mu):= (Q^{(1)}_* \otimes_{k} \cdots \otimes_{k} Q_*^{(r)},
\mu_{1} \otimes \cdots \otimes \mu_r)\,.
\end{equation}
is a projective $\env A$-resolution of $A$.  Set 
\[
Q_{j_1, \cdots, j_r} := Q_{j_1}^{(1)} \otimes_{k} \cdots \otimes_{k}
Q_{j_r}^{(r)} = \env {A_1} \otimes_{k} \cdots \otimes_{k}
\env{A_r} \cong \env A\,.
\]
Let $\sigma_{j_1, \dots, j_r}\colon Q_* \to A$ be the map whose
restriction to $Q_{j_1, \dots, j_r}$ is the canonical map
$\env A\to A$ and whose restriction to $Q_{\ell_1, \dots, \ell_r}$
is zero if $j_i \ne \ell_i$ for some $i$. Let
$\hat{\delta_i}:= \sigma_{j_1, \dots, j_r}$ where $j_i = 1$ and
$j_\ell = 0$ for $\ell \neq i$ and
$\hat{\chi_i}:=\sigma_{j_1, \dots, j_r}$ where $j_i = 2$ and
$j_\ell = 0$ for $\ell \neq i$.
 
The Hochschild cohomology ring of $A$ over $k$ has the form
\[
\Ext_{\env A}^{*}(A,A) = \begin{cases} 
A[\delta_1, \dots, \delta_r] & \text{ if $p = 2$, } \\
\Lambda_{A}(\delta_1, \dots, \delta_r) \otimes_{A}
A[\chi_1, \dots, \chi_r]  &\text{ otherwise,}
\end{cases}
\]
with $\delta_{i}$ and $\chi_{i}$ the cohomology classes corresponding to $\hat{\delta_{i}}$ and $\hat{\chi_{i}}$ respectively.

\section{Changing the coalgebra structure on $kE$} 
\label{sec:eleab}
Let $k$ be a field of positive characteristic $p$
and  set $A=k[x_{1},\dots,x_{r}]/(x_{1}^{p},\dots,x_{r}^{p})$.
There are two often-used coalgebra structures on $A$
that  make it a Hopf algebra. 

The first comes from viewing $A$ as the group algebra
of an elementary abelian $p$-group, say
$\langle g_{1},\dots,g_{c}  \rangle$ with base field
of characteristic $p$. Then $A$ has comultiplication
$\gDelta\colon A \to A \otimes A$ given by
$g \mapsto g \otimes g$; equivalently, 
\[
\xymatrix{ 
\gDelta(x_{i}) =  x_{i} \otimes 1  +  x_{i} \otimes x_{i} + 1 
\otimes x_{i}\,.
}
\]
The antipode is the homomorphism of $k$-algebras
(note that $A$ is commutative) induced by the map
$g_{i}\mapsto g_{i}^{-1}$, which translates to
\[
\grS(x_{i})= (1+x_{i})^{-1} - 1 = -x_{i}+x_{i}^{2} - \cdots +x_{i}^{p-1}
\]

The other coalgebra structure on $A$ comes from viewing
it as the restricted enveloping algebra of the restricted
$p$-Lie algebra $k^{r}$, with trivial bracket and
$p$-power operation. Then the comultiplication
$\lDelta\colon A \to A \otimes A$ given by
\[
\xymatrix{ 
\lDelta(x_{i}) = x_{i} \otimes 1 + 1 \otimes x_{i}\,.
}
\]
The antipode is the homomorphism of $k$-algebras $A\to A$ defined by
\[
\LiS(x_{i}) = -x_{i}
\]
The different coalgebra structures induce different
actions of $\Ext^{*}_{A}(k,k)$ on the cohomology of modules;
see Example~\ref{ex:badhopf}. However, the actions do
agree on the subalgebra generated by the Bocksteins
of the degree one elements. This is the content of
Theorem~\ref{thm:twohopf}. A key step in its proof
is an explicit computation of the map $\Phi_{\Delta}$
from Construction~\ref{con:ktoA} for the different
coalgebra structures. In view of the computations
recalled in Section~\ref{sec:prelim}, this amounts
to describing the maps 
\[
\Phi_{\gDelta},\Phi_{\lDelta}\colon
k[\eta_{1},\dots,\eta_{r},\zeta_{1},\dots,\zeta_{r}]\lra
A[\delta_{1},\dots,\delta_{r},\chi_{1},\dots,\chi_{r}]
\]
from the cohomology of $A$ to its Hochschild cohomology. 

\begin{thm}  
\label{thm:phi-gr}
With the Hopf algebra structure on $A$ induced by
$\gDelta$ and $\grS$, the homomorphism
$\Phi_{\gDelta}\colon \Ext^{*}_{A}(k,k) \to \Ext^{*}_{\env A}(A,A)$
of $k$-algebras is given by
\[
\Phi_{\gDelta}(\eta_{i}) = (1+x_{i})\delta_{i} \quad
\text{and}\quad  \Phi_{\gDelta}(\zeta_{i}) = \chi_{i}\,
\quad\text{for $i=1,\dots,r$.}
\]
\end{thm}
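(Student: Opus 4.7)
The plan is to unfold the definition of $\Phi_{\gDelta}$ from Construction~\ref{con:ktoA} concretely on cocycles. Apply the functor $\phi_{\gDelta}$ to the resolution $P_*$ from \eqref{eq:kres2}; since $\phi_{\gDelta}(k) \cong A$ as $\env A$-modules, the result is an $\env A$-projective resolution of $A$. Then lift the identity of $A$ to a chain map $v_*\colon Q_* \to \phi_{\gDelta}(P_*)$ from the standard resolution \eqref{eq:Ares2}. For any cocycle $\hat\xi\colon P_d \to k$ representing $\xi \in \Ext^d_A(k,k)$, the composite $\phi_{\gDelta}(\hat\xi)\circ v_d\colon Q_d \to A$ is a cocycle representing $\Phi_{\gDelta}(\xi)$ in Hochschild cohomology. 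Because $\gDelta$, the two resolutions, and each of the cocycles $\hat\eta_i, \hat\zeta_i, \hat\delta_i, \hat\chi_i$ decomposes as a tensor product over the cyclic factors $A_i = k[x_i]/(x_i^p)$---with an augmentation in every factor other than the $i$-th---it suffices to carry out the computation in the cyclic case $r = 1$ and then tensor with identities in the remaining slots.

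For the cyclic case, $A = k[x]/(x^p)$ and $\env A = k[y,z]/(y^p,z^p)$ with $y = x \otimes 1$, $z = 1 \otimes x$, and $g = 1+x$ is grouplike, with $\gDelta(g) = g \otimes g$ and $\grS(g) = g^{-1}$. The key algebraic identity is $(1+z)^p = 1$ in characteristic $p$, so $1+z$ is a unit in $\env A$ with $(1+z)^{-1} = (1+z)^{p-1}$. Using the formula for $\iota^{-1}$ in Construction~\ref{con:ktoA}(2) together with $\gDelta(x) = x \otimes 1 + x \otimes x + 1 \otimes x$, one computes
\[
\iota^{-1}(x \otimes 1) = g \otimes g^{-1} - 1 = (1+y)(1+z)^{-1} - 1 = (y-z)(1+z)^{-1}.
\]
Thus, under $\iota$, the resolution $\phi_{\gDelta}(P_*)$ is identified with the $\env A$-free resolution of $A$ whose differentials alternate between $(y-z)(1+z)^{-1}$ and $(y-z)^{p-1}(1+z)$.

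The chain map $v_*\colon Q_* \to \phi_{\gDelta}(P_*)$ lifting the identity on $A$ can then be taken to be multiplication by $1$ in even degrees and by $1+z$ in odd degrees; the identity $(1+z)^p = 1$ is precisely what makes the even-degree squares commute. The cocycles $\hat\eta\colon P_1 \to k$ and $\hat\zeta\colon P_2 \to k$ are each the augmentation $A \to k$, and under $\phi_{\gDelta}$ together with the identifications $\phi_{\gDelta}(k) \cong A$ and $\phi_{\gDelta}(P_i) \cong \env A$, both become the multiplication map $\mu\colon \env A \to A$. So $\Phi_{\gDelta}(\eta)$ is represented by $\mu \circ v_1\colon Q_1 \to A$, which sends $1$ to $\mu(1+z) = 1+x$, i.e., by $(1+x)\hat\delta$; and $\Phi_{\gDelta}(\zeta)$ is represented by $\mu \circ v_2 = \mu = \hat\chi$. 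I expect the main obstacle to be the careful bookkeeping with $\iota$ and $v_*$; the whole asymmetry between $\Phi_{\gDelta}(\eta_i) = (1+x_i)\delta_i$ and $\Phi_{\gDelta}(\zeta_i) = \chi_i$ comes down to the single identity $(1+z)^p = 1$.
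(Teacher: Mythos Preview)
Your proof is correct and follows essentially the same route as the paper: reduce to the cyclic case via the tensor decomposition, identify $\phi_{\gDelta}(P_*)$ with a free $\env A$-resolution through $\iota$, build the comparison map $Q_*\to\phi_{\gDelta}(P_*)$ that is the identity in even degrees and multiplication by $1+z$ in odd degrees, and read off the images of $\hat\eta$ and $\hat\zeta$. The only cosmetic difference is that you compute $\iota^{-1}(x\otimes 1)=(y-z)(1+z)^{-1}$ directly from the antipode formula using the grouplike element $g=1+x$, whereas the paper obtains the same relation by computing the $\env A$-action of $y$ and $z$ on $1\otimes 1$ inside $\phi_{\gDelta}(A)$; both arguments hinge on the identity $(1+z)^p=1$.
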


\begin{proof}
  We first verify the result for $r=1$; that is to say,
  when $A = k[x]/(x^p)$.  In what follows we use the
  maps $\phi_{\gDelta}$ and $\iota$, and their properties,
  from Construction~\ref{con:ktoA} without comment.
  Let $P_{*}$ be the minimal projective resolution of
  $k$ over $A$ from \eqref{eq:kres} and $Q_{*}$ the
  minimal projective resolution of $A$ over $\env A$
  from \eqref{eq:Ares}. Applying $\phi_{\gDelta}$ to $P_{*}$
  yields a projective resolution of $A$ over $\env A$.
  This gives the top row in the following commutative
  diagram of complexes of $\env A$-modules: 
\[
\xymatrixcolsep{1.2pc}
\xymatrix{
  \cdots \ar[r] & \phi_{\gDelta}(A) \ar[rr]^{\phi_{\gDelta}(x)} &&
  \phi_{\gDelta}(A) \ar[rr]^{\phi_{\gDelta}(x^{p-1})}  
  && \phi_{\gDelta}(A) \ar[rr]^{\phi_{\gDelta}(x)} &&
  \phi_{\gDelta}(A) \ar@{->>}[rr]^{\phi_{\gDelta}(\varepsilon)} &&
  \phi_{\gDelta}(k)  \\
  \cdots \ar[r] & \env A \ar@{->}[u]^{\iota} \ar[rr]^{\frac{y-z}{1+z}} &&
  \env A  \ar@{->}[u]^{\iota} \ar[rr]^{(\frac{y-z}{1+z})^{p-1}}  
  && \env A \ar@{->}[u]^{\iota} \ar[rr]^{\frac{y-z}{1+z}} &&
  \env A \ar@{->}[u]^{\iota} \ar@{->>}[rr]^{\mu} &&
  A \ar@{->}[u]^{\cong} \\
  \cdots \ar[r] & \env A \ar@{->}[u]^{(1+z)} \ar[rr]_{y-z} &&
  \env A  \ar@{=}[u] \ar[rr]_{(y-z)^{p-1}}  
  && \env A \ar@{->}[u]^{(1+z)} \ar[rr]_{y-z} && \env A
  \ar@{=}[u] \ar@{->>}[rr]^{\mu} && A \ar@{=}[u]
}
\]
The bottom row is the augmentation of the minimal projective
resolution~\eqref{eq:Ares} of $A$ over $\env A$. It is
clear that the lower part of the diagram is commutative.
As to the upper part,  the commutativity of the square
on the top right corner is clear.
For the next square, we note that $\phi_{\gDelta}(x)$ is 
the map that takes $1 \otimes 1$ to $x \otimes 1$ in 
$\phi_{\gDelta}(A)$. However, this is not multiplication by 
the element $y = x \otimes 1$ in $\env A$. See Construction \ref{con:ktoA}.
Instead, we have
that 
\[
y(1 \otimes 1) = x \otimes 1  + x \otimes x + 1 \otimes x
\qquad \text{and} \qquad z(1 \otimes 1) = 1 \otimes x
\]
in $\phi_{\gDelta}(A)$. Hence, one has
\[
(y-z)(1 \otimes 1) = (1+z)(x \otimes 1)
\]
and $\phi_{\gDelta}(x)$ is multiplication by $(y-z)/(1+z)$ as asserted.  Likewise, $\phi_{\gDelta}(x^{p-1})$ is multiplication by $((y-z)/(1+z))^{p-1}$

It is clear from the construction that  the cocycle $\hat\eta$ and $\hat\zeta$, from $P_{*}\to k$, are mapped to the cocycles $(1+z)\hat\delta$ and  $\hat\chi$, respectively, from $Q_{*}\to A$. This yields the desired result. For later use we denote
\begin{equation}
\label{eq:lifts}
\kappa\colon Q_{*} \lra  \phi_{\gDelta}(P_{*})
\end{equation}
the morphism of complexes of $\env A$-modules constructed above.

Assume $r\ge 2$. Let $P_{*}$ be the resolution of $k$ over $A$, and let $Q_{*}$ be the resolution of $A$ over $\env A$. The tensor product, over $k$, of the morphisms $\kappa^{(i)}\colon Q^{(i)}_{*}\to \phi_{\gDelta}(P^{(i)}_{*})$ from \eqref{eq:lifts} yields a morphism 
\[
\kappa:= \kappa^{(1)}\otimes_{k}\cdots \otimes_{k}
\kappa^{(r)}\colon Q_{*}  \lra \phi_{\gDelta}(P_{*})\,,
\]
of complexes of $\env A$-modules that lifts the isomorphism
$A\cong \phi_{\gDelta}(k)$. Once again, it is evident, by inspection,
that  the cocycles $\hat\eta_{i}$ and $\hat\zeta_{i}$ are mapped
to the cocycles $(1+z)\hat\delta_{i}$ and  $\hat\chi_{i}$, respectively.
\end{proof}

An analogous argument gives also the next result.

\begin{thm}  
\label{thm:phi-lie}
With the Hopf algebra structure on $A$ induced by $\lDelta$
and $\LiS$, the homomorphism
$\Phi_{\lDelta}\colon \Ext^{*}_{A}(k,k) \to \Ext^{*}_{\env A}(A,A)$
of $k$-algebras is given by
\[
\Phi_{\lDelta}(\eta_{i}) =\delta_{i} \quad \text{and}\quad
\Phi_{\lDelta}(\zeta_{i}) = \chi_{i}\, \quad\text{for $i=1,\dots,r$}
\]
\end{thm}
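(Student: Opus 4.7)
The plan is to mimic the proof of Theorem~\ref{thm:phi-gr} with the simpler coalgebra $\lDelta$ in place of $\gDelta$. I would first treat the rank-one case $A = k[x]/(x^p)$, and then deduce the general case by taking tensor products over $k$ of the rank-one morphisms, exactly as in the last paragraph of the proof of Theorem~\ref{thm:phi-gr}.

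For $r = 1$, I would apply $\phi_{\lDelta}$ to the minimal resolution~\eqref{eq:kres} of $k$ over $A$ to obtain a projective resolution of $\phi_{\lDelta}(k) \cong A$ over $\env A$. The whole task then reduces to building a commutative diagram comparing this resolution with the minimal resolution $Q_*$ of $A$ over $\env A$ from~\eqref{eq:Ares}, and reading off the images of the representing cocycles $\hat\eta$ and $\hat\zeta$.

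The single crucial computation is the action of the elements $y = x \otimes 1$ and $z = 1 \otimes x$ of $\env A$ on the generator $1 \otimes 1 \in \phi_{\lDelta}(A)$. Using the rule from Construction~\ref{con:ktoA} together with the formula $\lDelta(x) = x \otimes 1 + 1 \otimes x$, one finds
\[
y \cdot (1 \otimes 1) = x \otimes 1 + 1 \otimes x
\qquad \text{and} \qquad
z \cdot (1 \otimes 1) = 1 \otimes x,
\]
so $(y - z) \cdot (1 \otimes 1) = x \otimes 1 = \phi_{\lDelta}(x)(1 \otimes 1)$. Hence $\phi_{\lDelta}(x)$ is simply multiplication by $y - z$, and similarly $\phi_{\lDelta}(x^{p-1})$ is multiplication by $(y-z)^{p-1}$. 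These agree on the nose with the differentials in $Q_*$, so the isomorphism $\iota\colon \env A \xra{\cong} \phi_{\lDelta}(A)$ from Construction~\ref{con:ktoA}, applied in every degree, already furnishes a morphism of complexes $Q_* \to \phi_{\lDelta}(P_*)$ lifting the isomorphism $A \xra{\cong} \phi_{\lDelta}(k)$; no correcting factor is needed, in contrast with the $(1+z)$-twist required in Theorem~\ref{thm:phi-gr}.

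Composing $\iota$ with the cocycles $\hat\eta$ and $\hat\zeta$ on $\phi_{\lDelta}(P_*)$ evidently produces the cocycles $\hat\delta$ and $\hat\chi$ on $Q_*$, giving $\Phi_{\lDelta}(\eta) = \delta$ and $\Phi_{\lDelta}(\zeta) = \chi$ when $r = 1$. For $r \geq 2$, tensoring the rank-one liftings over $k$ yields a lifting $Q_* \to \phi_{\lDelta}(P_*)$ for the $r$-fold tensor product algebra, and inspection tensor-factor by tensor-factor gives the stated formulas $\Phi_{\lDelta}(\eta_i) = \delta_i$ and $\Phi_{\lDelta}(\zeta_i) = \chi_i$. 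I expect no genuine obstacle beyond correctly computing the $\env A$-action on $\phi_{\lDelta}(A)$; the absence of the $x \otimes x$ cross-term in $\lDelta$, compared with $\gDelta$, is precisely what makes the lifting the identity (via $\iota$) in each homological degree and so explains why both classes $\eta_i$ and $\zeta_i$ are sent to their Hochschild counterparts without correction.
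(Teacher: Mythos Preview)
Your proposal is correct and follows exactly the paper's strategy: reduce to the rank-one case, lift $Q_*$ to $\phi_{\lDelta}(P_*)$ via the isomorphism $\iota$, read off that $\hat\eta\mapsto\hat\delta$ and $\hat\zeta\mapsto\hat\chi$, and then tensor the rank-one liftings over $k$ for general $r$. Your observation that no $(1+z)$-correction is needed in the Lie case is right; the paper's displayed three-row diagram still carries the $(1+z)$ vertical maps (evidently copied from the $\gDelta$ diagram), but since its middle and bottom rows now have identical differentials those maps should be identities, exactly as your computation shows.
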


\begin{proof}
  The key point, as in the proof of the preceding theorem, is to verify
  that one has a commutative diagram of complexes of $\env A$-modules: 
\[
\xymatrixcolsep{1.2pc}
\xymatrix{
  \cdots \ar[r] & \phi_{\lDelta}(A) \ar[rr]^{\phi_{\lDelta}(x)} &&
  \phi_{\lDelta}(A) \ar[rr]^{\phi_{\lDelta}(x^{p-1})}  
  && \phi_{\lDelta}(A) \ar[rr]^{\phi_{\lDelta}(x)} &&
  \phi_{\lDelta}(A) \ar@{->>}[rr]^{\phi_{\lDelta}(\varepsilon)} 
  && \phi_{\lDelta}(k)  \\
  \cdots \ar[r] & \env A \ar@{->}[u]^{\iota} \ar[rr]^{y-z} &&
  \env A  \ar@{->}[u]^{\iota} \ar[rr]^{(y-z)^{p-1}}  
  && \env A \ar@{->}[u]^{\iota} \ar[rr]^{y-z} && \env A
  \ar@{->}[u]^{\iota} \ar@{->>}[rr]^{\mu} &&  A \ar@{->}[u]^{\cong} \\
  \cdots \ar[r] & \env A \ar@{->}[u]^{(1+z)} \ar[rr]_{y-z} &&
  \env A  \ar@{=}[u] \ar[rr]_{(y-z)^{p-1}}  
  && \env A \ar@{->}[u]^{(1+z)} \ar[rr]_{y-z} && \env A
  \ar@{=}[u] \ar@{->>}[rr]^{\mu} && A \ar@{=}[u]
}
\]
This proof of the commutativity is similar to that of the previous case. 
\end{proof}

The next result is direct consequence of the preceding computations.

\begin{thm}  
\label{thm:twohopf}
Let $A=k[x_{1},\dots,x_{r}]/(x_{1}^{p},\dots,x_{r}^{p})$, with $k$
a field of positive characteristic $p$.
For any $A$-module $M$, the homomorphisms
\[
\Theta^M_{\gDelta},  \Theta^M_{\lDelta}\colon \Ext^{*}_{A}(k,k)
\lra  \Ext^*_A(M, M) 
\]
defined in \eqref{eq:ktoM} using the coalgebra maps $\gDelta$
and $\lDelta$, respectively, coincide on the subring
$S = k[\zeta_1, \dots, \zeta_r]$ of $\Ext^{*}_{A}(k,k)$
defined in \eqref{eq:ringS}. \qed
\end{thm}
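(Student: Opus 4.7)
The plan is to factor both maps $\Theta^M_{\gDelta}$ and $\Theta^M_{\lDelta}$ through Hochschild cohomology using Theorem~\ref{thm:modulecoho}, then reduce the problem to a computation that has already been carried out in the two preceding theorems. Explicitly, Theorem~\ref{thm:modulecoho} gives
\[
\Theta^M_{\gDelta} = \Psi^M \circ \Phi_{\gDelta}
\quad\text{and}\quad
\Theta^M_{\lDelta} = \Psi^M \circ \Phi_{\lDelta},
\]
and crucially the map $\Psi^M$ is independent of the coalgebra structure (it is constructed purely from the enveloping-algebra action on $A$). Therefore it suffices to verify that $\Phi_{\gDelta}$ and $\Phi_{\lDelta}$ agree on the subring $S = k[\zeta_1,\dots,\zeta_r]$.

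Since $S$ is generated as a $k$-algebra by $\zeta_1,\dots,\zeta_r$ and both $\Phi_{\gDelta}$ and $\Phi_{\lDelta}$ are homomorphisms of $k$-algebras, I only need to compare their values on the generators. At this point I would invoke Theorem~\ref{thm:phi-gr}, which gives $\Phi_{\gDelta}(\zeta_i) = \chi_i$, and Theorem~\ref{thm:phi-lie}, which gives $\Phi_{\lDelta}(\zeta_i) = \chi_i$. These agree, so $\Phi_{\gDelta}$ and $\Phi_{\lDelta}$ coincide on each generator, hence on all of $S$.

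Composing with $\Psi^M$ then yields $\Theta^M_{\gDelta}|_S = \Theta^M_{\lDelta}|_S$, which is the claim. There is really no obstacle here: all the work has been done in Theorems~\ref{thm:phi-gr} and~\ref{thm:phi-lie}, whose proofs identified exactly how the discrepancy between the two coalgebra structures is absorbed by the lift of the projective resolution (the factor $1+z$ appearing in the comparison map for $\gDelta$). That discrepancy affects the images of the odd-degree generators $\eta_i$, which is precisely why one must restrict to $S$ rather than work on the full cohomology ring; the even-degree classes $\zeta_i$ see no such correction. The only thing worth remarking on is that in characteristic two the convention $\zeta_i = \eta_i^2$ is compatible with this conclusion because the common square of $\Phi_{\gDelta}(\eta_i) = (1+x_i)\delta_i$ and $\Phi_{\lDelta}(\eta_i) = \delta_i$ agree modulo the Hochschild relations when $p=2$, but this is automatic from the theorems already proved and needs no separate argument.
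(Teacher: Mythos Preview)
Your argument is correct and matches the paper's own reasoning exactly: the statement is recorded there as a direct consequence of Theorems~\ref{thm:phi-gr} and~\ref{thm:phi-lie} together with the factorisation through Hochschild cohomology in Theorem~\ref{thm:modulecoho}. Your additional remark about $p=2$ is also fine (since $(1+x_i)^2 = 1 + x_i^2 = 1$ in $A$ there), though as you note it is already subsumed in the statements of those theorems.
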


\begin{rem}
  In ongoing work, in collaboration with Luchezar L. Avramov,
  we have been able to establish a version of the
  preceding theorem for more general finite dimensional
  commutative algebras; the techniques required are rather
  more involved and will be presented elsewhere. This
  raises that possibility that such a result may be true
  for any finite dimensional commutative Hopf algebra.
\end{rem}

\section{Tensor products of $L_{\zeta}$-modules} 
\label{sec:lzeta}
As in Section~\ref{sec:eleab}, let $k$ be a field of
positive characteristic $p$ and  set 
\[
A=k[x_{1},\dots,x_{r}]/(x_{1}^{p},\dots,x_{r}^{p})\,.
\]
Let $S$ be the subalgebra of $\Ext^*_{A}(k,k)$ identified
in \eqref{eq:ringS}. We investigate the circumstances
under which the tensor products of $L_\zeta$ modules
(see Definition~\ref{defi:Lzeta}) are independent of the
Hopf algebra structures on $A$ described in
Section~\ref{sec:eleab}. The main result is as follows;
see the paragraph preceding Corollary~\ref{cor:HH} for notation.

\begin{thm}  
\label{thm:tensorS}
Let $\zeta$ be a homogeneous element of $S$. For any
$A$-module $M$, there is an isomorphism
$\gDelta(L_{\zeta}\otimes_{k}M)\cong \lDelta(L_{\zeta}\otimes_{k}M)$
of $A$-modules.
\end{thm}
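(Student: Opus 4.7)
The plan is to assemble the pieces already established in the paper. Specifically, the hypothesis of Corollary~\ref{cor:HH} needs to be verified for $\Delta_1 = \gDelta$ and $\Delta_2 = \lDelta$ on every homogeneous element of $S$, and then the conclusion needs to be upgraded from a stable isomorphism to an honest isomorphism using the finite-dimensionality of $A$ via Remark~\ref{rem:minimal}.

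First I would invoke Theorems~\ref{thm:phi-gr} and~\ref{thm:phi-lie}, which give the explicit formulas
\[
\Phi_{\gDelta}(\zeta_i) = \chi_i = \Phi_{\lDelta}(\zeta_i) \quad \text{for } i=1,\dots,r.
\]
Since $\Phi_{\gDelta}$ and $\Phi_{\lDelta}$ are both homomorphisms of graded $k$-algebras and $S = k[\zeta_1,\dots,\zeta_r]$ is the polynomial subring generated by these classes, the two maps must coincide on all of $S$. Hence $\Phi_{\gDelta}(\zeta) = \Phi_{\lDelta}(\zeta)$ for any homogeneous $\zeta \in S$.

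Next I would apply Corollary~\ref{cor:HH} to conclude that $\gDelta(L_{\zeta}\otimes_{k}M)$ and $\lDelta(L_{\zeta}\otimes_{k}M)$ are stably isomorphic as $A$-modules. Since $A = k[x_1,\dots,x_r]/(x_1^p,\dots,x_r^p)$ is a finite dimensional $k$-algebra, Remark~\ref{rem:minimal} applies: minimal projective resolutions exist, syzygy modules are well-defined up to isomorphism, and the $L_\zeta$ constructed from the unique cocycle representative on the minimal resolution is itself well-defined up to isomorphism. As indicated in that remark, the argument of Corollary~\ref{cor:HH} then yields not merely a stable isomorphism but an actual isomorphism, which is the desired conclusion.

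There is no real obstacle: the theorem is a direct corollary of the coincidence of $\Phi_{\gDelta}$ and $\Phi_{\lDelta}$ on $S$ established by the explicit computations in Theorems~\ref{thm:phi-gr} and~\ref{thm:phi-lie}, combined with the general categorical machinery packaged in Theorem~\ref{thm:modulecoho} and Corollary~\ref{cor:HH}. The only subtle point worth emphasizing in the write-up is why stable isomorphism can be promoted to isomorphism, and this is handled by the finite-dimensionality of $A$ together with the uniqueness of minimal resolutions.
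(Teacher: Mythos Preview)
Your proposal is correct and follows essentially the same route as the paper, which simply cites Corollary~\ref{cor:HH} and Theorem~\ref{thm:twohopf}; you have merely unwound Theorem~\ref{thm:twohopf} into its ingredients (Theorems~\ref{thm:phi-gr} and~\ref{thm:phi-lie}) and made explicit the appeal to Remark~\ref{rem:minimal} needed to upgrade the stable isomorphism to an actual one. If anything, your version is slightly more careful, since Corollary~\ref{cor:HH} as stated only yields a stable isomorphism and the paper's one-line proof leaves the invocation of Remark~\ref{rem:minimal} implicit.
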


\begin{proof}
  The statement is a direct consequence of Corollary \ref{cor:HH}
  and Theorem \ref{thm:twohopf}. 
\end{proof}

\begin{cor}  
\label{cor:tensorLzeta} 
Suppose that $\zeta_1, \dots, \zeta_n$ are homogeneous elements
of positive degree in $\Ext^{*}_{A}(k,k)$. If all but one of
$\zeta_1, \dots, \zeta_n$ is in the subring $S$, then there
is an isomorphism of $A$-modules
\[
\gDelta(L_{\zeta_1} \otimes_{k} \cdots \otimes_{k} L_{\zeta_n}) \cong
\lDelta(L_{\zeta_1} \otimes_{k} \cdots \otimes_{k} L_{\zeta_n})\,.
\]
\end{cor}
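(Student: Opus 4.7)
The plan is to induct on $n$ and reduce, one factor at a time, to Theorem~\ref{thm:tensorS}. First I would relabel so that $\zeta_1,\dots,\zeta_{n-1}$ all lie in $S$ and $\zeta_n$ is the lone element that need not. The base case $n=2$ is immediate from Theorem~\ref{thm:tensorS} applied with $\zeta=\zeta_1\in S$ and $M=L_{\zeta_n}$.

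For the inductive step with $n\ge 3$, I would set
\[
N_{\Delta}:=\Delta(L_{\zeta_2}\otimes_{k}\cdots\otimes_{k} L_{\zeta_n})\quad\text{for } \Delta\in\{\gDelta,\lDelta\}\,.
\]
Coassociativity of $\Delta$ gives an isomorphism of $A$-modules $\Delta(L_{\zeta_1}\otimes_{k}\cdots\otimes_{k} L_{\zeta_n})\cong \Delta(L_{\zeta_1}\otimes_{k} N_{\Delta})$. In the $(n{-}1)$-fold tensor product defining $N_{\Delta}$, only $\zeta_n$ lies outside $S$, so the inductive hypothesis yields an isomorphism $N_{\gDelta}\cong N_{\lDelta}$ of $A$-modules; write $N$ for either side.

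Since $\zeta_1\in S$, Theorem~\ref{thm:tensorS} applied with $M=N$ then yields $\gDelta(L_{\zeta_1}\otimes_{k} N)\cong \lDelta(L_{\zeta_1}\otimes_{k} N)$. Transporting through $N_{\gDelta}\cong N\cong N_{\lDelta}$, which is legitimate because $\Delta(L_{\zeta_1}\otimes_{k}-)$ is functorial in its second argument for each fixed $\Delta$, will deliver the required isomorphism. I do not anticipate a substantive obstacle: coassociativity reduces the $n$-fold product to a binary one, and Theorem~\ref{thm:tensorS} handles the binary case whenever the factor being peeled off corresponds to a class in $S$. The only point requiring care is the functorial transport in the last step, which is essentially bookkeeping.
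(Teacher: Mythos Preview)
Your proposal is correct and follows essentially the same inductive strategy as the paper: relabel so that $\zeta_1,\dots,\zeta_{n-1}\in S$, use coassociativity to peel off $L_{\zeta_1}$, apply the inductive hypothesis to the remaining $(n-1)$-fold product, and then invoke Theorem~\ref{thm:tensorS} with $\zeta=\zeta_1\in S$. The only cosmetic difference is that the paper takes $n=1$ as the (tautological) base case rather than $n=2$.
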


\begin{proof}
  Without loss of generality, it may be assumed that
  $\zeta_1, \dots, \zeta_{n-1}$ are in $S$. The proof is
  by a backwards induction on $n$, the base case $n = 1$
  being a tautology. The induction hypothesis yields the
  second isomorphism below
\begin{align*}
\gDelta(L_{\zeta_1} \otimes_{k} \cdots \otimes_{k} L_{\zeta_n})  
&\cong \gDelta(L_{\zeta_1} \otimes_{k}
\gDelta(L_{\zeta_{2}}\otimes_{k} \cdots \otimes_{k} L_{\zeta_n})) \\
&\cong \gDelta(L_{\zeta_1} \otimes_{k}
\lDelta(L_{\zeta_{2}} \otimes_{k} \cdots \otimes_{k} L_{\zeta_n})) \\
&\cong \lDelta(L_{\zeta_1} \otimes_{k}
\lDelta(L_{\zeta_{2}} \otimes_{k} \cdots \otimes_{k} L_{\zeta_n})) \\
&\cong \lDelta(L_{\zeta_1} \otimes_{k} \cdots
\otimes_{k} L_{\zeta_n})  
\end{align*}
The third one is by Theorem~\ref{thm:tensorS}, and
the other two are standard.
\end{proof}

\begin{rem}
  The modules $L_\zeta$ have some remarkable properties.
  Under certain circumstances, the annihilator in
  $\HH^*(E,k)$ of $\Ext^*_{kE}(L_\zeta,L_\zeta)$ is the
  ideal generated by $\zeta$. This happens, for example,
  if $p > 2$ and $n$  is even \cite{Ca5}. In general,  the
  annihilator of the cohomology of $L_\zeta$ depends on the
  choice of  the coalgebra structure as we see in
  Example~\ref{ex:badhopf}. The sequence \eqref{eq:Lzeta}
  has a translation   
\[
\xymatrix{
  \CE_\zeta\colon \quad 0 \ar[r] & \Omega^1(k) \ar[r] &
  L_\zeta \oplus Q \ar[r]  & \Omega^{d}(k) \ar[r] & 0
}
\]
where $Q$ is the projective cover of the trivial module.
The translated sequence represents the cohomology class
$\zeta \in \Ext_{kE}^1(\Omega^{d}(k), \Omega^1(k)) \cong
\Ext^{d}_{kE}(k,k)$.  Consequently, $\zeta$ is in the
annihilator of $\Ext^*_{kE}(M,M)$ for a module $M$, if
and only if  $\CE_\zeta \otimes_{k} M$  splits. This is
equivalent to the requirement that there is a stable isomorphism
\[ 
L_{\zeta} \otimes_{k} M  \ \cong \ \Omega^{d}(M) \oplus \Omega^1(M)
\]
\end{rem}

The following example, noted already in \cite{Cvar, Con}, shows that the conclusion of Theorem~\ref{thm:tensorS} may fail
if $\zeta$ is not in $S$. 

\begin{exm} 
\label{ex:badhopf} 
Let $k$ be a field of characteristic $2$ and $E$ an elementary
abelian group of order~4; thus $\HH(E,k) = k[\eta_{1},\eta_{2}]$;
see Section~\ref{sec:prelim}.  Set
$\zeta = \eta_1 - \alpha\eta_2 \in \HH^1(E,k)$ where
$\alpha \in k$ with $\alpha \neq 0$ or $1$.  The module
$L_{\zeta}$ has a $k$-basis consisting of elements $u, v$
such that $x_1u = \alpha x_2u$ and $x_1v = x_2v$. Using
the Lie coalgebra structure, we can compute that
$L_{\zeta} \otimes_{k} L_{\zeta}$ is isomorphic to a direct
sum of two copies of $L_{\zeta}$ generated by $u \otimes u$
and $u \otimes v$.  However, with the group coalgebra
structure, $L_{\zeta} \otimes_{k} L_{\zeta}$ is indecomposable.
Indeed, under this structure, one has
\[
x_1(u \otimes u) = \alpha(u \otimes v) + \alpha(v \otimes u) + 
\alpha^2 (v \otimes v) = x_2(u \otimes u) + \alpha^2 (v \otimes v).
\]
and the last term that makes it impossible to decompose
$L_{\zeta}\otimes_{k}L_{\zeta}$.
\end{exm}

\begin{rem}
  Computer calculations using the computer algebra system
  Magma \cite{BCannon} give evidence that
  Corollary \ref{cor:tensorLzeta} might have a strong converse.
  In one experiment,  two random elements $\gamma_1$
  and $\gamma_2$ were chosen in  $\HH^4(E,k)$ with $E$
  an elementary abelian group of order 8 and $k$ the
  field with 8 elements.  The tensor product of modules
  $L_{\gamma_1}$ and $L_{\gamma_2}$ was taken using both of
  the coalgebra structures and the two results were
  compared. This operation was performed several times and
  in every  case, the two tensor products were isomorphic
  if and only if one of  the two chosen cohomology elements
  was in the subring $S$. The same experiment was performed
  taking two elements in degree two of an elementary
  abelian group of order 27 over a field of order 9, with
  the same result. 
\end{rem}

\section{An equality of varieties} 
\label{sec:remvar}
Let $E$ be an elementary abelian $p$-group and $k$ an
algebraically closed field of characteristic $p$. In the
paper \cite{AS}, Avrunin and Scott prove a conjecture of
the first author (see \cite{Cvar}) asserting the equivalence
of the support variety of a module with a rank variety for
that same module. For notation, let
$kE = k[x_1, \dots, x_r]/(x_1^p, \dots, x_r^p)$. Let $M$
be a $kE$-module. The support variety $V_G(M)$ of $M$ is
the closed subset of $\mathrm{Proj}\, \HH^*(G,k)$
consisting of all homogeneous prime ideals that contain
the annihilator $J(M)$ in $\HH^*(E,k)$ of the  cohomology
ring $\Ext^*_{kE}(M, M)$.  The rank variety of $M$,
denoted $V^r_G(M)$ is
the set of all points $[\alpha_1, \dots, \alpha_r]$
in $\bP^{r-1}$ such that $\alpha^*(M)$ is not a free module.
Here $\alpha\colon k[t]/(t^p) \to
kE$ is given by $\alpha(t) = \alpha_1x_1 + \dots + \alpha_rx_r$
and $\alpha^*(M)$ is the restriction of $M$ to a
$k[t]/(t^P)$-module along
the map $\alpha$.

The conjecture states that for $\alpha \in \bP^{r-1}$,
$\alpha \in V^r_G(M)$ if and only if $\alpha^*(J(M)) = \{0\}$,
where $\alpha^*(J(M))$ is the restriction of the ideal
to the cohomology ring of $k[t]/(t^p)$ along $\alpha$.
This all makes sense because $\alpha$ is a flat embedding.
The most difficult part is the proof of the assertion that
if $\alpha^*(J(M)) = \{0\}$, then $\alpha^*(M)$ is a free
module over $k[t]/(t^p)$. The proof by Avrunin and Scott
uses a spectral sequence argument under the assumption
that $kE$ has the coalgebra structure of the restricted
enveloping algebra of an elementary Lie algebra. In this
case any such $\alpha$ is a map of Hopf algebras and
this point is important in the proof.  The other key
step is their proof that the variety is independent of
the coalgebra structure.

This last step is an easy consequence of Theorem \ref{thm:twohopf}.
The point is to restrict to the subring $S$. The
annihilator in $S$ of
$\Ext^*_{kE}(M,M)$ is $S \cap J(M)$. Moreover
$V_G(J(M)) = V_G(S \cap J(M))$. While the ideal $J(M)$
depends on the coalgebra structure, the ideal $J(M)\cap S$
does not, by Theorem \ref{thm:twohopf}.


\begin{thebibliography}{BeCa}

\bibitem{AS}
  G. S. Avrunin and L. Scott, {\em Quillen stratification for modules},
  Invent. Math.  {\bf 66}(1982), 277\--286.

\bibitem{BCannon}
W. Bosma, J. Cannon, and C. Playoust, 
{\em The Magma algebra system. I. The user language},
J. Symbolic Comput., {\bf 24} (1997), 235\--€"265. 

\bibitem{Cvar}
  J. F. Carlson, {\em The varieties and the cohomology
    ring of a module} J. Algebra  {\bf  85}(1983), 104\--143.

\bibitem{C2}
J. F. Carlson, \textit{The variety of an indecomposable module
is connected}, Invent. Math., \textbf{77}(1984), 291\--299.

\bibitem{Ca5}
  J. F. Carlson, {\em Products and projective resolutions},
  Proc. Sym. Pure Math., {\bf 47} (1987), 399\--408.

\bibitem{CFP} 
  J. F. Carlson, E. M. Friedlander and J. Pevtsova,
  {\em Representations of elementary abelian $p$-groups and bundles on
Grassmannians}, Advances in Math. {\bf 229} (2012), 2985\--3051. 

\bibitem{CI} J. F. Carlson and S. B. Iyengar, {\em Thick
  subcategories of the bounded derived category of a finite group},
  Trans. Amer. Math. Soc. {\bf 367} (2015), 2703\--€"2717.

\bibitem{CTVZ}
  J. F. Carlson, L. Townsley, L. Valero-Elizondo and M. Zhang,
  {\em Cohomology rings of finite groups. With an  appendix:
    Calculations of cohomology rings of groups of order
    dividing 64}, Algebras and Applications,  {\bf 3},
  Kluwer Academic Publishers, Dordrecht, 2003.

\bibitem{Con} 
S. B. Conlon, {\em Certain representation algebras}, J. Australian Math. 
Soc., {\bf 5}(1965), 83\--99. 

\bibitem{FP}
  E. Friedlander and J. Pevtsova, {\em $\Pi$-supports for
    modules for finite group schemes over a field},
  Duke Math. J., {\bf 139} (2007),  317\--368.

\bibitem{Ho}
T. Holm, {\em The Hochschild cohomology of a modular group algebra: The commutative case},
Comm. Algebra {\bf 24} (1996), 1957\--1969.

\bibitem{PW}
  J.~Pevtsova and S.~Witherspoon, {\em Varieties for modules
    of quantum elementary abelian groups},  Algebr.
  Represent. Theory {\bf 12}  (2009),   567\--595.

\end{thebibliography}
\end{document}